\newtheorem{theorem}{Theorem}[section]
\newtheorem{lemma}[theorem]{Lemma}
\newtheorem*{Acknowledgement}{\textnormal{\textbf{Acknowledgement}}}
\theoremstyle{definition}
\newtheorem{definition}[theorem]{Definition}
\newtheorem{example}[theorem]{Example}
\newtheorem{corollary}[theorem]{Corollary}
\newtheorem{proposition}[theorem]{Proposition}
\newtheorem{remark}[theorem]{Remark}
\numberwithin{equation}{section}
\newcommand{\beqa}{\begin{eqnarray*}}
\newcommand{\eeqa}{\end{eqnarray*}}
\newcommand{\beqn}{\begin{eqnarray}}
\newcommand{\eeqn}{\end{eqnarray}}
\newcounter{cnt1}
\newcounter{cnt2}
\newcounter{cnt3}
\newcommand{\blr}{\begin{list}{$($\roman{cnt1}$)$}
        {\usecounter{cnt1} \setlength{\topsep}{0pt}
                \setlength{\itemsep}{0pt}}}
\newcommand{\bla}{\begin{list}{$($\alph{cnt2}$)$}
        {\usecounter{cnt2} \setlength{\topsep}{0pt}
                \setlength{\itemsep}{0pt}}}
\newcommand{\bln}{\begin{list}{$($\arabic{cnt3}$)$}
        {\usecounter{cnt3} \setlength{\topsep}{0pt}
                \setlength{\itemsep}{0pt}}}
\newcommand{\el}{\end{list}}
\newtheorem{thm}{Theorem}
\newtheorem{Def}[thm]{Definition}
\newtheorem{rem}[thm]{Remark}
\newcommand{\Rem}{\begin{rem} \rm}
\newcommand{\bdfn}{\begin{Def} \rm}
\newcommand{\edfn}{\end{Def}}
\title{Geometric characterization of Generalized Property (II)}
\author[ S. Basu, \ S. Seal ]
			{Sudeshna Basu$^{1}$, Susmita Seal$ ^{2}$ }
		\address{{$^{1}$}   Sudeshna Basu,
		Department of Mathematics and Statistics, 
				Loyola University, 
				Baltimore, MD 21210, USA 
				}
		\email{sudeshnamelody@gmail.com} 
		\address {{$^{2}$} Susmita Seal, 
				School of Mathematical Sciences, National Institute of Science Educational and Research Bhubaneswar, An OCC of Homi Bhabha National Institute, P.O. - Jatni, District - Khurda, Odisha - 752050, India}
		\email{susmitaseal1996@gmail.com}
			\subjclass{46B20}
			\keywords{Property (II), $w^*$-Point of Continuity, Semi $w^*$-Point of Continuity,  Ball separation property, Compatible class.}
			\date{}
\begin{document}
\maketitle
\begin{abstract}
Let $\mathcal{A}$ be a compatible collection of bounded subsets of a Banach space $X$.
In this paper, we introduce the notion of $\mathcal{A}$-Property (II) and prove that 
 $X$ has $\mathcal{A}$-Property (II) if and only if every $f$ in $S_{X^*}$ is $\mathcal{A}$-semi PC of $B_{X^*}$.
\end{abstract}

\section{Introduction}

One of the most significant consequences of the Hahn–Banach Theorem is that a point lying outside a closed bounded convex set can be separated from the set by a hyperplane. The ball separation property, on the other hand, explores whether such a point can be separated from the set by closed balls as opposed to hyperplanes. While finite dimensional spaces admit an equivalence between these two notions, such an equivalence does not hold in general infinite dimensional Banach spaces.
The origins of ball separation property go back to Mazur \cite{M}, who introduced what is now known as the Mazur Intersection Property (MIP): every bounded closed convex set can be represented as an intersection of closed balls. Since then, several variants of ball separation properties have been investigated. A comprehensive survey of this area can be found in \cite{GJSM}. See also \cite{CL}. 

Among these, Property (II), introduced by Chen and Lin \cite{CL}, has emerged as an important weakening of MIP.
 A Banach space $X$ is said to have Property (II) if every closed bounded convex set $C\subset X$ can be expressed as an intersection of closed convex hulls of finitely many balls. 
They also obtained a remarkable geometric characterization: $X$ has Property (II) if and only if the set of $w^*$-Points of Continuity of $B_{X^*}$ is dense in $S_{X^*}$. A similar type of characterization for MIP, in terms of $w^*$-denting points, was earlier established by Giles, Gregory, and Sims \cite{GGS}. The importance of such characterizations lies in the fact that they connect two distinct geometric aspects of Banach spaces, namely ball separation properties and the geometry of the dual unit ball. In this sense, Property (II) serves as a natural and important weakening of MIP.
More recently, a further refinement of the characterization of Property (II), describing the geometry of all points on the dual unit sphere, was obtained by authors in \cite{BS}: $X$ has Property (II) if and only if every point of the dual unit sphere is a semi $w^*$-Point of Continuity. 



Over the past few decades, considerable attention has been devoted to various generalizations of MIP, particularly to the problem of determining when members of given families of closed bounded convex sets (rather than all closed bounded convex sets) can be represented as intersections of closed balls. Early investigations focused on the families of compact convex sets \cite{S1, WZ, V}, weakly compact convex sets \cite{V, Z} and compact convex sets with finite affine dimension \cite{S}. Subsequently, in \cite{BG, CL, DC}, these questions were studied for compatible collection of bounded subsets of $X$.
Such generalized properties have often been characterized in the literature through the geometry of the dual unit ball, making this an active and evolving area of research. 

Such generalizations of Property (II), together with their geometric characterizations, appear to be absent from the existing literature.
Motivated by the above works, particularly \cite{BG, V}, we address this gap by introducing the following generalization of Property (II), which we call $\mathcal{A}$-Property (II).

\begin{definition}
Let $\mathcal{A}$ be a compatible collection of bounded subsets of $X$.
A Banach space $X$ is said to have $\mathcal{A}$-Property (II) if for every closed convex set $A\in \mathcal{A}$ and $\beta \geqslant 0$, $\overline{A+\beta B_X}$ is an intersection of closed convex hulls of finitely many balls.
\end{definition}
The main goal of this paper is to characterize $\mathcal{A}$-Property (II) in terms of the geometry of the dual unit ball.
We shall now
outline the contents of the paper. In Section 2, we recall the necessary notation and preliminaries needed in the sequel. In Section 3, we obtain our main result. In Section 4, we discuss a special case of $\mathcal{A}$-Property (II), which yields an interesting weaker geometric property
of the dual unit ball.
Finally, in Section 5, we present some further consequences and conclude with an open problem.

 \section{Notations and preliminaries}
 
 Throughout this paper, $X$ denotes a real Banach space. The closed unit ball of $X$ and the unit sphere of $X$ are denoted by $B_X$ and $S_X$, respectively. For any subset $C\subset X,$ we adopt the following standard notations: $\overline{C}$ for the closure of $C$, co($C$) for the convex hull of $C$, $|\mathrm{co}|(C)$ for the absolutely convex hull of $C$, and diam $(C)$ for the diameter of $C$. Moreover, the cone generated by $C$ is denoted by cone$(C) = \{\lambda h : \lambda > 0, h \in C\}$. The ball 
 centered at $x\in X$ with radius $r>0$ is denoted by $B(x,r)$.
 
A $w^*$-slice of $B_{X^*}$ is a set of the form
$$S(B_{X^*}, x, \alpha) = \{f \in B_{X^*} : f(x) > \sup\limits_{f\in B_{X^*}} f(x) - \alpha \}$$
where $x\in X$ and $\alpha > 0.$ Throughout, we assume without loss of generality that $\|x\| = 1$. 
 Note that every $w^*$-slice is  $w^*$-open, and the collection of all $w^*$-slices forms a subbase for the $w^*$-toplogy on $B_{X^*}$. Moreover, by \cite[Lemma 4.2]{CL}, the $w^*$-toplogy on $B_{X^*}$ admits a neighbourhood base consisting of sets of the form $$\{f\in B_{X^*} : f(x_i)>\gamma_i>0, \ i=1,2,\ldots,n\}$$
where $x_1,x_2,\ldots,x_n$ in $X$.


For any bounded subset $A \subset X$, we define the function $\|\cdot\|_A : X^* \to \mathbb{R}$ by $$\|f\|_A = \sup\{|f(x)| : x \in A\}, \quad \text{for } f \in X^*.$$
It is straightforward to verify that $\|\cdot\|_A$ induces a seminorm on $X^*$. Under this seminorm, the diameter of a subset $B \subset X^*$ is given by $\text{diam}_A(B) = \sup\{\|f - g\|_A : f, g \in B\}$. Furthermore, the open ball in $X^*$ centered at $f \in X^*$ with radius $\varepsilon > 0$, generated by the seminorm $\|\cdot\|_A$, is denoted by
$B_A(f, \varepsilon) = \{g \in X^* : \|g - f\|_A < \varepsilon\}.$


\begin{definition}
\cite{CL}
A collection $\mathcal{A}$ of bounded subsets of $X$ is said to be compatible  if for each $A\in \mathcal{A}$, we have
\begin{enumerate}
\item  $C\in \mathcal{A}$ for all $C\subset A.$
\item  $A+x, A\bigcup \{x\} \in \mathcal{A}$ for all $x\in X.$
\item  $\overline{|\mathrm{co}|}(A)\in \mathcal{A}.$ 
\end{enumerate}
\end{definition}

 \begin{definition}
  A point $f\in S_{X^*}$ is called 
 \begin{enumerate}
     \item \cite{DU} $w^*$-Point of Continuity ($w^*$-PC)  of $B_{X^*}$ if for each $\varepsilon >0$, there exists a $w^*$-open set $W$ of $B_{X^*}$ such that $f\in W \subset B(f, \varepsilon)$.
     \item 
     \cite{BS}
     semi $w^*$-Point of Continuity (semi $w^*$-PC) of $B_{X^*}$ if for each $\varepsilon >0$, there exists a $w^*$-open set $W$ of $B_{X^*}$ such that  $W \subset B(f, \varepsilon)$.
 \end{enumerate}
\end{definition}


We conclude this section with the following result due to Phelps, which will play a significant role in the subsequent sections.
\begin{lemma}\label{phelp's lemma}
\cite{P} 
Let $X$ be a normed space and $f,g \in S_{X^*}$. Consider $A=\{x\in B_X:f(x)>\frac{\varepsilon}{2}\}$ where $0<\varepsilon<1.$ If $\inf g(A)>0,$ then $\|f-g\|<\varepsilon.$
\end{lemma}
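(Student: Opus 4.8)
The statement to prove is Phelps's lemma as recalled in Lemma~\ref{phelp's lemma}: with $f,g\in S_{X^*}$, $0<\varepsilon<1$, and $A=\{x\in B_X:f(x)>\frac{\varepsilon}{2}\}$, the hypothesis $\inf g(A)>0$ forces $\|f-g\|<\varepsilon$. The plan is to bound $(f-g)(x)$ uniformly for $x\in B_X$ by splitting $B_X$ according to whether or not $x$ lies in the slice-like set $A$.

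First I would handle the easy direction. For $x\in B_X$ with $f(x)\le \tfrac{\varepsilon}{2}$, since $g(x)$ can be as negative as $-1$ (or we use $g(-x)$), the bound on $f(x)-g(x)$ is not immediate; so instead the cleaner route is to estimate $\|f-g\|=\sup_{x\in B_X}(f(x)-g(x))$ using $-x\in B_X$ as well, i.e. to show $|f(x)-g(x)|<\varepsilon$ on a symmetric set. Concretely: if $x\in B_X$ satisfies $|f(x)|\le \tfrac{\varepsilon}{2}$, I would like to also control $g(x)$. The key observation is that $\inf g(A)=:\delta>0$ means $g$ is strictly positive on the set where $f$ exceeds $\varepsilon/2$; combining with $\|g\|=1$ one extracts that $g$ cannot be too negative where $f$ is small. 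The standard trick is: for any $x\in B_X$, consider the point $y$ obtained by moving $x$ slightly in a direction where $f$ is large — pick $u\in B_X$ with $f(u)$ close to $1$, and look at convex combinations $tu+(1-t)x$ or $tu-(1-t)x$; choosing $t$ so the combination lands in $A$, the positivity of $g$ there yields a lower bound on $g$ at the original point.

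The main computation, then, is to choose the parameters so that the final bound comes out below $\varepsilon$ rather than merely $O(\varepsilon)$. I expect this to require writing, for $x\in B_X$, a point of the form $z=\lambda x+(1-\lambda)u\in B_X$ with $f(z)>\tfrac{\varepsilon}{2}$ (possible as long as $f(x)$ is not too negative, after possibly replacing $x$ by $-x$), deducing $g(z)\ge 0$, hence $\lambda g(x)\ge -(1-\lambda)g(u)\ge -(1-\lambda)$, and then optimizing $\lambda$ against the constraint $f(z)>\varepsilon/2$, which forces $1-\lambda$ to be of order $\varepsilon/2$ relative to $1-f(x)$. Tracking the arithmetic should give $g(x)\ge f(x)-\varepsilon$ for all $x\in B_X$, equivalently $f(x)-g(x)<\varepsilon$, and by symmetry (applying the same to $-x$, using that $A$ and the hypothesis are stated for $f$ but $\|g\|=1$ is symmetric) one gets $\|f-g\|<\varepsilon$.

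The step I expect to be the main obstacle is getting the constant exactly right — i.e. ensuring the estimate closes with strict inequality $<\varepsilon$ and not something like $<2\varepsilon$ — since the naive splitting loses a factor of $2$. The resolution is to exploit that $\inf g(A)>0$ is an open condition: there is slack $\delta>0$, so one can afford to take the slice parameter slightly larger than $\varepsilon/2$ (say $\varepsilon/2+\eta$) and let $\eta\to 0$, or to use $\|g\|=1$ to its full strength rather than just $\|g\|\le 1$. Once the parameter bookkeeping is arranged so the auxiliary point $z$ sits strictly inside $A$, the positivity $g(z)>0$ (strict) propagates to the desired strict bound. I would also keep in mind the boundary case where $f(x)$ itself already exceeds $\varepsilon/2$, which is immediate since then $x\in A$ and $g(x)>0\ge f(x)-1>f(x)-\varepsilon$ when $f(x)\le 1$; and the complementary case uses the convex-combination argument above.
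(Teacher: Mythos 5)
The paper offers no proof of this lemma (it is imported verbatim from \cite{CL}), so the only question is whether your sketch would close on its own, and as written it does not. The gap is your treatment of the case $f(x)>\frac{\varepsilon}{2}$, which you declare ``immediate'' via the chain $g(x)>0\geqslant f(x)-1>f(x)-\varepsilon$. The last inequality is backwards: since $\varepsilon<1$ we have $f(x)-1\leqslant f(x)-\varepsilon$, so positivity of $g$ on $A$ proves nothing once $f(x)>\varepsilon$. More structurally, every lower bound your convex-combination device can produce has the form $g(x)\geqslant-\frac{1-\lambda}{\lambda}g(u)\geqslant-\frac{1-\lambda}{\lambda}\leqslant 0$, i.e.\ is nonpositive, whereas the target $f(x)-\varepsilon$ is strictly positive exactly on the range $f(x)>\varepsilon$; and the mixing parameter needed to land $z=\lambda x+(1-\lambda)u$ in $A$ exceeds $1$ there, so the argument degenerates. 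This is not a boundary case but half the lemma. (On the complementary range $f(x)\leqslant\frac{\varepsilon}{2}$ your computation does close: it yields $g(x)\geqslant\frac{f(x)-\varepsilon/2}{1-\varepsilon/2}\geqslant f(x)-\varepsilon$, the second inequality holding whenever $f(x)\geqslant\varepsilon-1$, and for $f(x)<\varepsilon-1$ the bound $g(x)\geqslant-1>f(x)-\varepsilon$ is trivial.)

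To repair the missing case you must use $\|g\|=1$ concretely, not just $g\geqslant 0$ on $A$. For $t=f(x)>\frac{\varepsilon}{2}$, pick $v\in B_X$ with $g(v)<-1+\eta$ and set $z=\alpha x+(1-\alpha)v$ with $\alpha$ slightly above $\frac{1+\varepsilon/2}{1+t}$; then $f(z)\geqslant\alpha t-(1-\alpha)>\frac{\varepsilon}{2}$, so $z\in A$, and $g(z)>0$ forces $g(x)>\frac{1-\alpha}{\alpha}(1-\eta)$, which in the limit gives $g(x)\geqslant\frac{t-\varepsilon/2}{1+\varepsilon/2}\geqslant t-\varepsilon$. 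Alternatively, the whole lemma follows from the Hahn--Banach (bipolar) description of functionals nonnegative on the cone $\{x:f(x)\geqslant\frac{\varepsilon}{2}\|x\|\}$: one gets $g=\lambda f+h$ with $\lambda\geqslant0$ and $\|h\|\leqslant\lambda\frac{\varepsilon}{2}$, the normalisations force $\frac{1}{1+\varepsilon/2}\leqslant\lambda\leqslant\frac{1}{1-\varepsilon/2}$, and both cases $\lambda\leqslant1$ and $\lambda>1$ give $\|f-g\|\leqslant\varepsilon$ (strictness coming from the slack $\inf g(A)>0$). Finally, your suspicion that the constant is the delicate point is well founded, but the danger is not strictness: the symmetric route through $\ker f$ that you also gesture at only yields $|g|\leqslant\frac{\varepsilon}{2-\varepsilon}$ on $\ker f\cap B_X$ and hence only $\|f-g\|\leqslant\frac{2\varepsilon}{2-\varepsilon}>\varepsilon$, so that route cannot be patched by letting a slice parameter tend to a limit.
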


\section{Main result}
Before presenting our main result, we introduce a generalized notion of the semi $w^*$-PC of $B_{X^*}$ and establish its local geometric characterization in terms of ball separation.

\begin{definition}
Let $\mathcal{A}$ be a collection of bounded subsets in $X$. We say that $f\in S_{X^*}$ is
  $\mathcal{A}$-semi Point of Continuity ($\mathcal{A}$-semi PC) of $B_{X^*}$ if for each $A\in \mathcal{A}$ and $\varepsilon >0$, there exists a $w^*$-open set $W$ of $B_{X^*}$ such that $W\subset B_A(f, \varepsilon)$.
 \end{definition}
 


 
 \begin{theorem}\label{Mainastrsemipc}
Let $\mathcal{A}$ be a compatible collection of bounded subsets of $X$ and $f_0\in S_{X^*}$. The following are equivalent.
\begin{enumerate}
\item $f_0$ is $\mathcal{A}$-semi PC of $B_{X^*}$.
\item For any $A\in \mathcal{A}$, $\beta \geqslant 0$ and $x_0\in X$, if $\inf f_0(A+\beta B_X)>f_0(x_0)$,
then there are finitely many balls $B_1,\ldots,B_n$ in $X$ such that $(A+\beta B_X) \subset \overline{\mathrm{co}}\bigcup\limits_{i=1}^{n} B_i$ and $x_0\notin \overline{\mathrm{co}}\bigcup\limits_{i=1}^{n} B_i$.
\end{enumerate}
\end{theorem}

\begin{proof}
 (i) $\Longrightarrow$ (ii) : 
Let $A\in \mathcal{A}$, $\beta \geqslant 0$ and $x_0\in X$ such that $\inf f_0(A+\beta B_X)>f_0(x_0)$. Since $\mathcal{A}$ is a compatible collection, we may assume without loss of generality that $x_0=0$ and $A\subset B_X.$ 

Indeed, choose 
 $m>0$ such that $\frac{1}{m} (A-x_0)\subset B_X$. 
 Then $\inf f_0(\frac{1}{m} (A-x_0)+\frac{\beta}{m} B_X)>0$.  
 If there exist balls $B(z_1,r_1), \ldots, B(z_n,r_n)$ in $X$ such that 
 $(\frac{1}{m} (A-x_0)+\frac{\beta}{m} B_X) \subset \overline{\mathrm{co}} (\bigcup\limits_{i=1}^{n} B(z_i,r_i))$ 
 and $0\notin \overline{\mathrm{co}} (\bigcup\limits_{i=1}^{n} B(z_i,r_i))$, then $(A + \beta B_X)\subset \overline{\mathrm{co}} (\bigcup\limits_{i=1}^{n} B(x_0+mz_i,mr_i))$ and $x_0\notin \overline{\mathrm{co}} (\bigcup\limits_{i=1}^{n} B(x_0+mz_i,mr_i))$.

Also let $\inf f_0(A+\beta B_X)=2\alpha>0$. 
Since $f_0$ is $\mathcal{A}$-semi PC of $B_{X^*}$, there exists a $w^*$-open set 
$$V=\{f\in B_{X^*} : f(x_i)>\gamma_i>0, x_i\in X, \ i=1,2,\ldots,n\}$$ of $B_{X^*}$
(where $x_1,\ldots,x_n\in S_X$) such that $V\subset B_A(f_0, \alpha)$. Then 
\begin{equation}\label{Mainm1}
\inf f_1(A)\geqslant\alpha + \beta \quad \forall f_1\in V.
\end{equation}
Indeed, given $f_1\in V$, we have 
$$\inf f_1(A)  \geqslant  \inf f_0(A) - \|f_0-f_1\|_A \geqslant 2\alpha + \beta -\alpha= \alpha + \beta >0.$$

Choose $f\in V$. Then for each $i=1,\ldots,n$, we have $f(x_i)>\gamma_i$ and therefore we can choose $\alpha_i>0$ such that $f(x_i)>\alpha_i> \gamma_i>0.$ We also select $m\in \mathbb{N}$ with $\alpha_i-\frac{1+\beta}{m}> \gamma_i$ for all $i=1,\ldots,n$. 
Finally, choose $x_0\in \alpha B_X$ such that $f(x_0)>0$, and hence choose $0<\eta < f(x_0)$.

 Consider $$K=\overline{\mathrm{co}} \{B(mx_1, m\alpha_1) \bigcup \ldots \bigcup B(mx_k, m\alpha_k)\bigcup B(x_0,\eta) \}.$$ 
 Observe that, for all $i=1,\ldots,n$, 
\begin{equation}\notag 
\begin{split}
&\inf f(B(mx_i, m\alpha_i))\geqslant f(mx_i)- m\alpha_i>0,\\ 
\text{and} \ \ 
  & \inf f(B(x_0,\eta))= f(x_0)- \eta>0.
  \end{split}
\end{equation} 
 Therefore,  $\inf f(K)\geqslant \min\{f(x_0)- \eta, f(mx_1)- m\alpha_1, \ldots, f(mx_n)- m\alpha_n\}  >0.$ 
 Hence, $0\notin K.$
 
Lastly, we claim that  $A+\beta B_X\subset K.$ 
If not, there exists $y_0\in (A+\beta B_X)\setminus K.$ By Hahn-Banach separation theorem there exists $h\in S_{X^*}$ such that $\inf h(K) > h(y_0)\geqslant -(1+\beta).$ Therefore, for each $i=1,\ldots,n$, $$h(mx_i)-m\alpha_i = \inf h (B(mx_i,m\alpha_i))\geqslant \inf h(K)>-(1+\beta)$$
That gives, $h(x_i)>\alpha_i-\frac{1+\beta}{m}>\gamma_i$ for all $i=1,\ldots,n$ and hence $h\in V.$ Thus from ($\ref{Mainm1}$), it follows that,
\begin{equation}\label{Mainchap 6 equ}
 \inf h(K) > h(y_0)\geqslant \inf h(A+\beta B_X)\geqslant\alpha +\beta-\beta=\alpha.
\end{equation}
Again,
\begin{equation}\label{Mainchap 6 equation}
 \inf h(K)\leqslant h(x_0)\leqslant \|x_0\|\leqslant \alpha.
\end{equation}
Therefore from ($\ref{Mainchap 6 equ}$) and ($\ref{Mainchap 6 equation}$) we get a contradiction. Hence, our claim is true. 

 (ii) $\Longrightarrow$ (i).  
 Let $A\in \mathcal{A}$ and $\varepsilon>0.$ Choose $x_0\in X$ such that 
\begin{equation} \label{Maineqstrong}
 \|x_0\|=\sup\{\|a\|: a\in A\}+4\varepsilon, \ \ f_0(x_0)>\|x_0\|-\varepsilon.
 \end{equation}
 Let $K=\overline{|\mathrm{co}|}(A\bigcup \{x_0\})$, $K_{\varepsilon}=\{x\in K : f_0(x)\geqslant 2 \varepsilon\}$ and $\beta = \varepsilon$. Then $\inf f_0(K_{\varepsilon}+\beta B_X)\geqslant \varepsilon >0$. Thus there are finitely many balls $B_1=B(x_1,r_1),\ldots, B_n=B(x_n,r_n)$ in $X$ such that $$K_{\varepsilon}+\beta B_X \subset \overline{\mathrm{co}}\bigcup\limits_{i=1}^{n} B_i \quad \text{and} \quad 0\notin \overline{\mathrm{co}}\bigcup\limits_{i=1}^{n} B_i.$$ 
 By Hahn-Banach separation theorem there exists $g\in S_{X^*}$ such that $\inf g(\overline{\mathrm{co}} ( \bigcup\limits_{i=1}^{n} B_i))>0.$ 
 Consider $w^*$-open set 
  $$V=\bigcap\limits_{i=1}^{n} S(B_{X^*}, \frac{x_i}{\|x_i\|}, 1-\frac{r_i}{\|x_i\|}).$$ 
 Observe that $g\in V$ and hence $V$ is nonempty.

Fix $h\in V.$ Then $h(x_i)>r_i$ for all $i=1,\ldots,n$. Therefore, 
 \begin{equation}\notag
 \begin{split}
 \inf h(K_{\varepsilon}+\beta B_X) \geqslant \inf h (\overline{\mathrm{co}} (\bigcup\limits_{i=1}^{n} B_i))
 \geqslant \min\{\inf h(B_1), \ldots, \inf h(B_n) \}\hspace{1.5 cm}\\
  \geqslant \min\{h(x_1) - r_1, \ldots, h(x_n) - r_n\}\hspace{1 cm}\\
  >0.\hspace{5.6 cm}
 \end{split}
 \end{equation}
 It follows that $\inf h(K_{\varepsilon})>\beta$.
Applying Lemma $\ref{phelp's lemma}$ to the space Y = span $K$ with unit ball $K$, we have 
\begin{equation}\label{Mainbhp ball eq1}
\|\frac{f_0}{\|f_0\|_K}-\frac{h}{\|h\|_K}\|_K<4\frac{\varepsilon}{\|f_0\|_K}.
\end{equation} 
Also, observe that $2\varepsilon \frac{x_0}{f_0(x_0)} \in K_{\varepsilon}$. Hence $\beta < \inf h(K_{\varepsilon})< h(2\varepsilon \frac{x_0}{f_0(x_0)})$. Thus $h(x_0)>\frac{f_0(x_0) \beta}{2\varepsilon}=f_0(x_0)$.

From ($\ref{Maineqstrong}$), it follows that 
\begin{equation}\label{Maineqqq1}
\|x_0\|-\varepsilon <f_0(x_0) \leqslant \|f_0\|_K\leqslant\|x_0\|
\end{equation}
\begin{equation}\label{Maineqqq2}
\begin{split}
\|x_0\|\geqslant \|h\|_K\geqslant \sup h(K_{\varepsilon})\geqslant h(x_0)=f_0(x_0)>\|x_0\|-\varepsilon.
\end{split}
\end{equation}
Thus from ($\ref{Maineqqq1}$) and ($\ref{Maineqqq2}$), we have $|\|f_0\|_K-\|h\|_K|<\varepsilon$.
Then 
\begin{equation}\notag
\begin{split}
\|f_0-h\|_A \leqslant \|f_0-h\|_K
\leqslant \|f_0-\frac{\|f_0\|_K}{\|h\|_K} h\|_K + \|\frac{\|f_0\|_K}{\|h\|_K} h - h\|_K\\
<4\varepsilon + |\|f_0\|_K - \|h\|_K|<5\varepsilon.\hspace{1.3 cm}
\end{split}
\end{equation}
Therefore, $h\in B_A(f_0,5\varepsilon)$. Hence, $V\subset B_A(f_0,5\varepsilon).$
\end{proof}

The following theorem is the main result of this paper.

\begin{theorem}\label{Mainthastrcor}
Let $\mathcal{A}$ be a compatiable collection of bounded subsets of $X$. Then $X$ has  $\mathcal{A}$-Property (II) if and only if every $f$ in $S_{X^*}$ is $\mathcal{A}$-semi PC of $B_{X^*}$.
 \end{theorem}

\begin{proof}
First part follows directly from Theorem $\ref{Mainastrsemipc}$.

Conversely, suppose that every $f\in S_{X^*}$ is $\mathcal{A}$-semi PC of $B_{X^*}$.
Let $A\in \mathcal{A}$ be a closed convex set, $\beta \geqslant 0$ and $x_0\notin \overline{A+\beta B_X}$. Without loss of generality, we may assume that $x_0=0$.
By the Hahn-Banach separation theorem there exists $f \in S_{X^*}$ such that $\inf f (\overline{A+\beta B_X}) > 0$. 
Since $f$ is $\mathcal{A}$-semi PC of $B_{X^*}$ and $\inf f (A+\beta B_X) > 0$, then Theorem $\ref{Mainastrsemipc}$ yields finitely many balls $B_1,\ldots,B_n$ in $X$ with $A+\beta B_X \subset \overline{\mathrm{co}}\bigcup\limits_{i=1}^{n} B_i$ and $0\notin \overline{\mathrm{co}}\bigcup\limits_{i=1}^{n} B_i$. Hence, $\overline{A+\beta B_X} \subset \overline{\mathrm{co}}\bigcup\limits_{i=1}^{n} B_i$ and $0\notin \overline{\mathrm{co}}\bigcup\limits_{i=1}^{n} B_i.$
\end{proof}

\section{A special case of $\mathcal{A}$-Property (II)}
In this section, we investigate a particular case of $\mathcal{A}$-Property (II), corresponding to $\beta=0$. In this setting, every closed bounded convex set $A\in \mathcal{A}$ can be represented as the intersection of closed convex hulls of finitely many balls. We show that even under this weaker framework one still obtains a complete characterization in terms of geometry of dual unit ball (see Proposition $\ref{sp case}$). To this end, we introduce the following notion:
\begin{definition}
Let $\mathcal{A}$ be a collection of bounded subsets in $X$. A point $f\in S_{X^*}$ is said to be weak $\mathcal{A}$-semi Point of Continuity (weak $\mathcal{A}$-semi PC) of $B_{X^*}$ if for each $A\in \mathcal{A}$ and $\varepsilon >0$, there exists a $w^*$-open set $W$ of $B_{X^*}$ such that $W\subset \mathrm{cone}(B_A(f, \varepsilon))$.
\end{definition}

\begin{proposition}\label{asemipc}
Let $\mathcal{A}$ be a compatible collection of bounded subsets of $X$ and $f_0\in S_{X^*}$. The following are equivalent.
\begin{enumerate}
\item $f_0$ is weak $\mathcal{A}$-semi PC of $B_{X^*}$.
\item For any $A\in \mathcal{A}$ and $x_0\in X$, if $\inf f_0(A)>f_0(x_0)$, then there are finitely many balls $B_1,\ldots,B_n$ in $X$ such that $A \subset \overline{\mathrm{co}}\bigcup\limits_{i=1}^{n} B_i$ and $x_0\notin \overline{\mathrm{co}}\bigcup\limits_{i=1}^{n} B_i$.
\end{enumerate}
\end{proposition}

\begin{proof}
 (i) $\Longrightarrow$ (ii). 
Let $A\in \mathcal{A}$. Since $\mathcal{A}$ is a compatible collection, then as in Theorem $\ref{Mainastrsemipc}$ (i) $\Rightarrow$ (ii), we may assume without loss of generality that $x_0=0$ and $A\subset B_X$. 

Now suppose $\inf f_0(A)=2\alpha>0$. Then $\alpha < \frac{1}{2}$. Since $f_0$ is weak $\mathcal{A}$-semi PC of $B_{X^*}$, there exists a $w^*$-open neighbourhood 
$$V=\{f\in B_{X^*} : f(x_i)>\gamma_i>0, x_i\in X, \ i=1,2,\ldots,n\}$$ of $B_{X^*}$
(where $x_1,\ldots,x_n\in S_X$) such that $V\subset \mathrm{cone}(B_A(f_0, \alpha))$. Then 
\begin{equation}\label{m1}
\inf f_1(A)>0 \ \forall f_1\in V.
\end{equation}
Indeed, given $f_1\in V$, we have $f_1=\lambda g_1$ for some $\lambda >0$ and $g_1\in B_A(f_0, \alpha)$. Hence $$\inf f_1(A) = \lambda \inf g_1(A) \geqslant \lambda (\inf f_0(A) - \|f_0-g_1\|_A) \geqslant \lambda (2\alpha -\alpha)= \lambda \alpha >0.$$

Choose $f\in V$. Proceeding as in the proof of Theorem $\ref{Mainastrsemipc}$ (i) $\Rightarrow$ (ii), we obtain 
$$K=\overline{\mathrm{co}} \{B(mx_1, m\alpha_1) \bigcup \ldots \bigcup B(mx_k, m\alpha_k)\bigcup B(x_0,\eta) \} \quad \text{with} \quad 0\notin K,$$
where $\alpha_i>0$ and $m\in \mathbb{N}$ are such that $f(x_i)>\alpha_i> \gamma_i>0$ and $(1-\frac{1}{m})\alpha_i-\frac{1}{m}> \gamma_i$ for all $i=1,\ldots,n$. Also $x_0\in (\frac{m}{m-1} d(0,A))B_X$ such that  $0<\eta < f(x_0)$.


 Lastly, we claim that  $A\subset K.$ 
If not, there exists $y_0\in A\setminus K.$ By Hahn-Banach separation theorem there exists $h\in S_{X^*}$ such that $\inf h(K) > h(y_0)\geqslant -1.$ Arguing exactly as in the proof of Theorem $\ref{Mainastrsemipc}$ (i) $\Rightarrow$ (ii)  $h\in V.$ 
Thus from ($\ref{m1}$), it follows that, $h(y_0)\geqslant \inf h(A)>0.$
Therefore, $h(x_i)>\alpha_i$ for all $i=1,\ldots,n$.

Fix $x\in A$ and $f_x\in S_{X^*}$ such that $f_x(x)=\|x\|$. Then for all $i=1,\ldots,n$,
\begin{equation}\notag
\begin{split}
((1-\frac{1}{m})h-\frac{1}{m} f_x)(x_i)>(1-\frac{1}{m}) \alpha_i -\frac{1}{m} > \gamma_i
\end{split}
\end{equation}
Thus $(1-\frac{1}{m})h-\frac{1}{m} f_x\in V \subset \mathrm{cone}(B_A(f_0, \alpha)).$ Then from ($\ref{m1}$), $$((1-\frac{1}{m})h-\frac{1}{m} f_x)(x)\geqslant \inf  ((1-\frac{1}{m})h-\frac{1}{m} f_x)(A)>0.$$
Then $h(x)> \frac{\|x\| m}{m-1}\geqslant \frac{d(0,A) m}{m-1}.$ It follows that 
\begin{equation}\label{chap 6 equ}
\inf h(K)>h(y_0)> \frac{d(0,A) m}{m-1}.
\end{equation}
Again,
\begin{equation}\label{chap 6 equation}
 \inf h(K)\leqslant h(x_0)\leqslant \|x_0\|\leqslant \frac{d(0,A) m}{m-1}.
\end{equation}
Therefore from ($\ref{chap 6 equ}$) and ($\ref{chap 6 equation}$) we get a contradiction. Hence, our claim is true.

(ii) $\Longrightarrow$ (i). 
 Let $A\in \mathcal{A}$ and $\varepsilon>0.$ Choose $x_0\in X$ such that $$\|x_0\|=\sup\{\|a\|: a\in A\}+2\varepsilon, \ \ f_0(x_0)>\|x_0\|-\varepsilon.$$
 Let $K=\overline{|\mathrm{co}|}(A\bigcup \{x_0\})$ and $K_{\varepsilon}=\{x\in K : f_0(x)\geqslant \varepsilon\}$. Then $\inf f_0(K_{\varepsilon})\geqslant \varepsilon >0$. Proceeding as in the proof of Theorem $\ref{Mainastrsemipc}$ (ii) $\Rightarrow$ (i), we obtain a nonempty $w^*$-open set  $V$ of $B_{X^*}$ such that for each $h\in V$, 
 \begin{equation}\label{bhp ball eq1}
\|\frac{f_0}{\|f_0\|_K}-\frac{h}{\|h\|_K}\|_K<2\frac{\varepsilon}{\|f_0\|_K}.
\end{equation} 
It follows that 
$$\|f_0-\frac{\|f_0\|_K}{\|h\|_K} h\|_A \leqslant \|f_0-\frac{\|f_0\|_K}{\|h\|_K} h\|_K <2\varepsilon.$$
Therefore, $h= \frac{\|h\|_K}{\|f_0\|_K} (\frac{\|f_0\|_K}{\|h\|_K} h)\in \mathrm{cone}(B_A(f_0,2\varepsilon))$. Hence, $V\subset \mathrm{cone}(B_A(f_0,2\varepsilon)).$Therefore, $h= \frac{\|h\|_K}{\|f_0\|_K} (\frac{\|f_0\|_K}{\|h\|_K} h)\in \mathrm{cone}(B_A(f_0,2\varepsilon))$. Hence, $V\subset \mathrm{cone}(B_A(f_0,2\varepsilon)).$

\end{proof} 


Suppose $\mathcal{A}$ is a compatible collection of bounded subsets of $X$, and let  $\tau_{\mathcal{A}}$ be the topology on $X^*$ generated by the seminorms $\{\|\cdot\|_A : A \in \mathcal{A}\}$. Under this topology, the sets $\{B_A(f, \varepsilon) : A\in \mathcal{A}, \varepsilon >0\}$ form a local base at 0.


\begin{proposition}\label{sp case}
Let $\mathcal{A}$ be a compatible collection of bounded subsets of $X$. The following are equivalent.
\begin{enumerate}
\item Every closed bounded convex set $A\in \mathcal{A}$ is an intersection of closed convex hulls of finitely many balls.
\item Every $f$ in $S_{X^*}$ is weak $\mathcal{A}$-semi PC of $B_{X^*}$.
\item The cone of weak $\mathcal{A}$-semi PC of $B_{X^*}$ is $\tau_\mathcal{A}$-dense in $X^*$.
\end{enumerate}
 \end{proposition}

\begin{proof}
(i) $\Longrightarrow$ (ii) and (ii) $\Longrightarrow$ (iii) are immediate from Proposition $\ref{asemipc}$.

(iii) $\Longrightarrow$ (i). Let $A\in \mathcal{A}$ be a closed convex set and $x_0\notin A$. Without loss of generality, we may assume that $x_0=0$.
By the Hahn-Banach separation theorem there exists $f \in S_{X^*}$ such that $\inf f (A) > 0$. Then there exist $\lambda > 0$ and weak $\mathcal{A}$-semi PC
$f_0 \in S_{X^*}$  such that
$\|f-\lambda f_0\|_A < \inf f (A) $.
Hence $\inf f_0 (A) > 0$. It follows from Proposition $\ref{asemipc}$ that there are finitely many balls $B_1,\ldots,B_n$ in $X$ with $A \subset \overline{\mathrm{co}}\bigcup\limits_{i=1}^{n} B_i$ and $0\notin \overline{\mathrm{co}}\bigcup\limits_{i=1}^{n} B_i$.
\end{proof}

\begin{remark}
The results analogous to Proposition $\ref{asemipc}$ and $\ref{sp case}$ in the setting of MIP were studied in \cite{DC}.
\end{remark}

\section{Remarks and Open problem}

The following generalization of $w^*$-PC was introduced by Chen and Lin \cite{CL}.
\begin{definition}
\cite{CL}
Let $\mathcal{A}$ be a collection of bounded subsets in $X$.  A point $f \in S_{X^*}$ is said to be $\mathcal{A}$ PC of $B_{X^*}$ if for each $A\in \mathcal{A}$ and $\varepsilon >0$, there exists a $w^*$-open set $W$ of $B_{X^*}$ such that $f\in W\subset B_A(f, \varepsilon)$.
\end{definition}

Let $\mathcal{K}$ denote the family of all compact convex subsets of $X$. Then we have the following. 

\begin{proposition}\label{cpt}
Every $f$ in $S_{X^*}$ is $\mathcal{K}$ PC of $B_{X^*}$.
\end{proposition}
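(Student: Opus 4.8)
The plan is to exploit the elementary fact that, on norm-compact subsets of $X$, the weak-$*$ topology of $B_{X^*}$ coincides with the topology of uniform convergence, so that proximity of functionals at finitely many points forces proximity in the seminorm $\|\cdot\|_A$. The first step is to observe that every $A\in\mathcal{K}$ is relatively norm-compact. To see this I would check that the family $\mathcal{R}$ of all subsets of $X$ with norm-compact closure is itself a compatible collection: it is plainly stable under passing to subsets, under translation, and under adjoining a single point, and it is stable under $A\mapsto\overline{|\mathrm{co}|}(A)$ because the closed absolutely convex hull of a norm-compact subset of a Banach space is again norm-compact (Mazur's theorem). Since $\mathcal{R}$ contains every compact convex subset of $X$ and $\mathcal{K}$ is the smallest compatible collection with that property, we get $\mathcal{K}\subseteq\mathcal{R}$; in particular, for $A\in\mathcal{K}$ the set $K:=\overline{A}$ is norm-compact, and $\|\cdot\|_A=\|\cdot\|_K$ since functionals are continuous.

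Next, fix $f\in S_{X^*}$, $A\in\mathcal{K}$ and $\varepsilon>0$, and put $K=\overline{A}$. Using compactness of $K$ I would choose a finite $(\varepsilon/4)$-net $x_1,\dots,x_n\in K$, i.e. $K\subseteq\bigcup_{i=1}^{n}B(x_i,\varepsilon/4)$, and then take
\[
W=\{g\in B_{X^*}:\ |g(x_i)-f(x_i)|<\varepsilon/4,\ i=1,\dots,n\},
\]
which is a $w^*$-open neighbourhood of $f$ in $B_{X^*}$; note that $f\in W$, which is essential here since, unlike the semi-PC notions, the definition of $\mathcal{K}$-PC of $B_{X^*}$ requires the neighbourhood to contain $f$.

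Finally I would verify that $W\subseteq B_A(f,\varepsilon)$. For $g\in W$ and any $x\in K$, pick $x_i$ with $\|x-x_i\|<\varepsilon/4$; using $\|f\|=1$ and $\|g\|\leqslant 1$, the triangle inequality gives $|f(x)-g(x)|\leqslant|f(x)-f(x_i)|+|f(x_i)-g(x_i)|+|g(x_i)-g(x)|<3\varepsilon/4<\varepsilon$. Taking the supremum over $x\in K\supseteq A$ yields $\|f-g\|_A\leqslant\|f-g\|_K<\varepsilon$, so $W\subseteq B_A(f,\varepsilon)$, which is exactly the assertion that $f$ is a $\mathcal{K}$-PC of $B_{X^*}$. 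The only step requiring genuine care is the relative compactness of the members of $\mathcal{K}$, which rests on Mazur's compactness theorem for convex hulls together with a routine verification of the compatibility axioms for $\mathcal{R}$; everything after that is a standard $\varepsilon$-net estimate.
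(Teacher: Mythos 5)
Your proof is correct and follows essentially the same route as the paper's: a finite $\varepsilon$-net for the compact set determines a $w^*$-open neighbourhood of $f$ on which $\|\cdot\|_A$-proximity is forced by the triangle inequality. The only difference is that you explicitly justify (via Mazur's compactness theorem and the compatibility axioms) that every member of $\mathcal{K}$ is relatively norm-compact, a detail the paper's proof passes over by simply taking $C$ to be a compact set in $B_X$.
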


\begin{proof}
Let $f_0\in S_{X^*}$. Also let $C$ be a compact set in $B_X$ and $\varepsilon >0$. Since $C$ is compact, then there exists $c_1,\ldots, c_n\in C$ such that $C\subset \bigcup\limits_{i=1}^{n} B(c_i,\varepsilon)$. Consider $w^*$-open set $W=\{f\in B_{X^*}: |f(c_i)-f_0(c_i)|<\varepsilon\}$ of $B_{X^*}$. Let $f,g \in W$. Fix $c\in C$. Then there exists $1\leqslant i \leqslant n$ such that $c\in B(c_i,\varepsilon)$. Thus
$$|f(c)-g(c)|\leqslant |f(c)-f(c_i)|+|f(c_i)-g(c_i)|+|g(c_i)-g(c)|<4\varepsilon.$$
Hence $f_0\in W\subset B_C(f_0,4\varepsilon)$.
\end{proof}

Since $\mathcal{K}$ PC is stronger than $\mathcal{K}$-semi PC,  Theorem $\ref{Mainthastrcor}$ immediately yields the following corollary.
\begin{corollary}
Let $X$ be a Banach space. Then for every compact convex set $K\subset X$ and $\beta \geqslant 0$, $\overline{K+\beta B_X}$ is an intersection of closed convex hulls of finitely many balls.
\end{corollary}

 Observe that when $\mathcal{A}$ is the collection of all bounded subsets of $X$, the notion of $\mathcal{A}$ PC (resp. $\mathcal{A}$-semi PC) coincide with $w^*$-PC (resp. semi $w^*$-PC). The following example shows that $w^*$-PC is strictly stronger than semi $w^*$-PC. Consequently, $\mathcal{A}$ PC is strictly stronger than $\mathcal{A}$-semi PC.

\begin{example}
Consider $X=l_1$. By \cite[Corollary 2.8]{SM}, $X$ admits an isometric embedding into a Banach space $Z$ satisfying Property (II). Consequently, every point of $S_{Z^*}$ is  semi $w^*$-PC of $B_{Z^*}$ \cite[Theorem 4.6]{BS}. However, it need not be the case that every point of $S_{Z^*}$ is $w^*$-PC of $B_{Z^*}$. Indeed, if every point of $S_{Z^*}$ were $w^*$-PC of $B_{Z^*}$, then, by the hereditary property, every point of $S_{X^*}$ would also be  $w^*$-PC of $B_{X^*}$. This would imply that $X$ is Asplund \cite[Theorem 1.0.12, Theorem 2.1.28]{B2}, which is a contradiction. Therefore, there exists a point of $S_{Z^*}$ that is  semi $w^*$-PC of $B_{Z^*}$ but not  $w^*$-PC of $B_{Z^*}$.
\end{example}

As mentioned in Section 1, it is known that a Banach space $X$ has Property (II) if and only if the set of $w^*$-PC of $B_{X^*}$ is dense in $S_{X^*}$ \cite{CL}, which is further equivalent to every point of $S_{X^*}$ being a semi $w^*$-PC of $B_{X^*}$ \cite{BS}. Compare this to the generalized version of Property (II): for a compatible collection $\mathcal{A}$ of bounded subsets of $X$, Theorem $\ref{Mainthastrcor}$ establishes that $X$ has $\mathcal{A}$ Property (II) if and only if every point on $S_{X^*}$ is $\mathcal{A}$-semi PC of $B_{X^*}$. Furthermore, observe that if the set of$\mathcal{A}$-PC of $B_{X^*}$ is $\tau_{\mathcal{A}}$-dense in $S_{X^*}$, then every point of $S_{X^*}$ is $\mathcal{A}$-semi PC of $B_{X^*}$. Hence, by Theorem $\ref{Mainthastrcor}$, $X$ has $\mathcal{A}$ Property (II). This naturally leads to the following question:

{\bf Question}: Let $\mathcal{A}$ be a compatible collection of bounded subsets of $X$. If $X$ has $\mathcal{A}$ Property (II), does it follow that the set of $\mathcal{A}$-PC of $B_{X^*}$ is $\tau_{\mathcal{A}}$-dense in $S_{X^*}$?


\begin{Acknowledgement}
          The first author is grateful to Professor Timothy Clarke, Chair, Department of Mathematics and Statistics, Loyola University, for his support and encouragement. She is also grateful to Professor Bahram Roughani, Associate Dean of the College of Arts and Sciences, Loyola University, for providing her with the Dean's supplemental grant during her travel in Summer 2025. 
      \end{Acknowledgement}

\end{document}